\newtheorem{theorem}{Theorem}
\newtheorem{lemma}{Lemma}
\newcommand{\ve}{{\sc VertexExtend} }
\newcommand{\veb}{{\sc VertexExtend}}
\newcommand{\gl}{{\sc Glue} }
\newcommand{\glb}{{\sc Glue}}
\newcommand{\abs}[1]{\lvert #1 \rvert} 
\newcommand{\ex}{\text{ex}}
\begin{document}

\title{Computation of the Ramsey Numbers\\ $R(C_4,K_9)$ and $R(C_4,K_{10})$}
\author{
Ivan Livinsky\\
\small Department of Mathematics \\[-0.8ex]
\small University of Toronto\\ [-0.8ex]
\small Toronto, ON M5S 2E4\\ [-0.8ex]
\small \texttt{ivan.livinskyi@mail.utoronto.ca}
\\ [.6em]
\\ [-.6em]
Alexander Lange,  Stanis\l aw Radziszowski \\
\small Department of Computer Science \\[-0.8ex]
\small Rochester Institute of Technology \\[-0.8ex]
\small Rochester, NY 14623 \\[-0.8ex]
\small \texttt{\{arl9577,spr\}@cs.rit.edu}\\
}
\date{}
\maketitle

\begin{abstract}
The Ramsey number $R(C_4,K_m)$ is the smallest $n$ such that any graph on $n$
vertices contains a cycle of length four or an independent set of order
$m$.  With the help of computer algorithms we obtain the exact values of the
Ramsey numbers $R(C_4,K_9)=30$ and $R(C_4,K_{10})=36$. New bounds for
the next two open cases are also presented.
\end{abstract}

\section{Introduction}
Let $G$ and $H$ be simple graphs. An $n$-vertex graph $F$ is a $(G,H;n)$-graph
if it contains no subgraph isomorphic to $G$ and $\overline{F}$ contains no
subgraph isomorphic to $H$. Define $\mathcal{R}(G,H;n)$ to be the set of all
such graphs.  The Ramsey number $R(G,H)$ is the smallest $n$ such that for every
two-coloring of the edges of $K_n$, a monochromatic copy of $G$ or $H$ exists in
the first or second color, respectively. Clearly, if a $(G,H;n)$-graph exists,
then $R(G,H) > n$. It is known that Ramsey numbers exist \cite{Ramsey1930} for
all $G$ and $H$. The values and bounds for various types of such numbers are
collected and regularly updated by the third author \cite{Radziszowski2011b}.

The cycle-complete Ramsey numbers $R(C_n,K_m)$ have received much attention,
both theoretically and computationally. For fixed $n=3$, the problem becomes
that of
$R(3,k)$, which has been widely studied (see for example \cite{Spencer2011}),
including exact determination of its asymptotics \cite{Kim1995}. Since 1976, it
has been conjectured that $R(C_n,K_m)=(n-1)(m-1)+1$ for all $n \geq m \geq 3$,
except $n=m=3$ \cite{Faudree1978,efrs1978}.  Note that the lower bound is easy:
$(m-1)$ vertex-disjoint copies of $K_{n-1}$ provides a witness for $R(C_n,K_m) >
(n-1)(m-1)$. For over 35 years, much work has been done to verify the upper
bound, with $m=8$ being the current smallest open case.
\vspace*{1em}

This work involves fixed $n=4$, that is, the case of avoiding the quadrilateral
$C_4$ in the first color. The currently best known asymptotic bounds for
$R(C_4,K_m)$ are as follows.

\begin{theorem}[\cite{Spencer1977,Caro2000b}] There exist positive constants $c_1$ and $c_2$ such that
  \begin{equation*} \label{eq:c4bounds}
    c_1\left(\frac{m}{\log{m}}\right)^{\frac{3}{2}} \leq R(C_4,K_m) \leq c_2
    \left( \frac{m}{\log{m}} \right)^2.
  \end{equation*}
\end{theorem}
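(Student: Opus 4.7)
The plan is to establish the two bounds by entirely different techniques: an extremal/independence--number argument for the upper bound, and the probabilistic deletion method for the lower bound.

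For the upper bound, I would first recall that a $C_4$-free graph is ``locally sparse.'' Because any two vertices share at most one common neighbor, one has $\sum_v \binom{d(v)}{2} \leq \binom{n}{2}$, and convexity then forces the average degree to be $O(\sqrt{n})$. This is a Kővári--Sós--Turán style input. The next ingredient is a Shearer/Ajtai--Koml\'os--Szemer\'edi--type lower bound on the independence number of locally sparse graphs: in a $C_4$-free graph on $n$ vertices with average degree $d$, one has $\alpha(G)=\Omega(n\log d/d)$. Plugging in $d=O(\sqrt{n})$ yields $\alpha(G)=\Omega(\sqrt{n}\,\log n)$. If no $C_4$ and no independent set of size $m$ coexist, then $\sqrt{n}\log n \leq C\, m$, which rearranges to $n \leq c_2(m/\log m)^2$.

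For the lower bound, I would construct a $C_4$-free graph on $n \sim c_1(m/\log m)^{3/2}$ vertices with independence number less than $m$ by the alteration method. Take $G(n,p)$ with $p$ of order $(\log n)/m$. The expected number of $C_4$'s is on the order of $n^4 p^4$, and deleting one edge from each destroys all $C_4$'s while removing only a negligible fraction of the $\binom{n}{2}p$ edges, provided $n^2 p^3 = o(1)$. The independence number of $G(n,p)$ concentrates near $(2/p)\log(np)$, so our choice of $p$ keeps $\alpha$ below $m$ with positive probability; edge deletions only decrease $\alpha$, so the resulting $C_4$-free graph still has $\alpha<m$. Balancing the two constraints $p \sim (\log n)/m$ and $p \ll n^{-2/3}$ yields $n$ of order $(m/\log m)^{3/2}$.

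The main obstacle is the lower-bound construction: one must choose $p$ so that the deletion step removes a vanishing fraction of edges while simultaneously controlling $\alpha(G(n,p))$ in the relevant sparse regime. This requires the right sharp concentration of $\alpha$ (a union bound over $\binom{n}{m}$ independent sets together with the estimate $(1-p)^{\binom{m}{2}}\leq e^{-p m^2/2}$ suffices) and a Markov--type bound on $C_4$ counts, and the two estimates must be tuned so that both happen with probability bounded away from zero at the same $n$. Once these are aligned, the existence of the desired graph follows from a single application of the alteration method.
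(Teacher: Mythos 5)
This is a quoted theorem: the paper does not prove it, but attributes the lower bound to Spencer's 1977 probabilistic argument and the upper bound to Caro, Li, Rousseau, and Zhang (crediting Szemer\'edi). Your upper-bound sketch does match the cited argument in its essentials: in a $C_4$-free graph every neighborhood induces a matching (two neighbors of $v$ joined through a common neighbor would close a $C_4$), the K\H{o}v\'ari--S\'os--Tur\'an count $\sum_v \binom{d(v)}{2} \leq \binom{n}{2}$ gives average degree $O(\sqrt{n})$, and a Shearer/AKS-type bound for such locally sparse graphs gives $\alpha(G) = \Omega(n \log d / d) = \Omega(\sqrt{n}\log n)$, which rearranges to $R(C_4,K_m) \leq c_2 (m/\log m)^2$; the only gaps are routine (the independence bound in this generality needs either a max-degree reduction by discarding high-degree vertices, or Shearer's few-triangles version, since the vanilla AKS bound is stated for triangle-free graphs, and $C_4$-free graphs may contain triangles).

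The lower bound, however, is genuinely broken, in two ways. First, your claim that ``edge deletions only decrease $\alpha$'' is backwards: removing edges can only create independent sets, so $\alpha(G-F) \geq \alpha(G)$, and controlling $\alpha$ \emph{after} the deletion step is precisely the difficulty the alteration method must confront --- either by deleting a vertex from each $C_4$ (which indeed does not increase $\alpha$) or by showing that with high probability every $m$-set spans more edges than the total number of $C_4$'s, so that one deleted edge per $C_4$ cannot make any $m$-set independent. Second, even once repaired, deletion cannot reach the exponent $3/2$. Vertex deletion needs $\mathbb{E}[\#C_4] \sim n^4p^4 = O(n)$, forcing $p = O(n^{-3/4})$, hence $m \sim (2/p)\log(np) \sim n^{3/4}\log n$ and only $R(C_4,K_m) \geq c\,(m/\log m)^{4/3}$. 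For the robust edge-deletion variant, note that your own parameters are inconsistent: with $p \sim (\log n)/m$ and $n \sim (m/\log m)^{3/2}$ one gets $n^2p^3 \asymp 1$, not $o(1)$; and at the boundary $p \asymp n^{-2/3}$ the number of $C_4$'s is of order $n^{4/3}$, vastly exceeding the $\sim p m^2 \asymp n^{2/3}\log^2 n$ edges spanned by a typical $m$-set, so the ``every $m$-set survives the deletions'' argument fails badly. Spencer's actual proof of the $(m/\log m)^{3/2}$ bound applies the Lov\'asz Local Lemma directly to the bad events ``this $4$-set forms a $C_4$'' (probability $\sim p^4$) and ``this $m$-set is independent'' (probability $(1-p)^{\binom{m}{2}}$) at density $p \asymp n^{-2/3}$, with no deletion at all; the local lemma is the missing idea, and without it your alteration scheme proves only the weaker exponent $4/3$.
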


\noindent The lower bound was obtained by Spencer in 1977 \cite{Spencer1977}
using the probabilistic method. The upper bound was published by
Caro, Li, Rousseau, and Zhang in 2000 \cite{Caro2000b}, who in turn gave credit
to an unpublished work by Szemer\'{e}di. The main challenge
is determining whether $R(C_4,K_n) < n^{2-\epsilon}$ for some $\epsilon > 0$, a
question posed by Erd\H{o}s in 1981 \cite{Erdos1981}.

Prior to this work, the exact values for $R(C_4,K_m)$ were known for $3 \leq m
\leq 8$. Here, we present a computational proof that $R(C_4,K_9)=30$ and
$R(C_4,K_{10})=36$. The known values and bounds, including our new results, are
gathered in Table \ref{tab:values}.


\begin{table}[h]
  \centering
  \begin{tabular}{ r c c c } \hline
    $m$ & $R(C_4,K_m)$ & Year & References \\ \hline
    3 & 7  & 1971 & \cite{Chartrand1971} \\
    4 & 10 & 1972 & \cite{Chvatal1972}\\
    5 & 14 & 1977 & \cite{Clancy1977} \\
    6 & 18 & 1987/1977 & \cite{Exoo1987}/\cite{Rousseau1977}\\
    7 & 22 & 2002/1997 & \cite{Radziszowski2002}/\cite{Jayawardene1998}\\
    8 & 26 & 2002 & \cite{Radziszowski2002} \\ \hline
    9 & 30 & \multicolumn{2}{c}{\multirow{4}{*}{this work}} \\
    10 & 36 &    \\
    11 & 39--44 & \\
    12 & 42--53 & \\ \hline
  \end{tabular}
  \caption{Known values and bounds for $R(C_4,K_m)$. \\Double references
    correspond to lower and upper bounds.}
    \label{tab:values}
\end{table}

The value of $R(C_4,K_6)$ and bounds $21\leq R(C_4,K_7)\leq 22$ were presented
by Jayawardene and Rousseau in \cite{Jayawardene1998,Jayawardene2000}. The numbers $R(C_4,K_7)$,
$R(C_4,K_8)$ and the bounds $30\leq R(C_4,K_9)\leq 33$, $34\leq R(C_4,K_{10})\leq
40$ were given by Radziszowski and Tse in \cite{Radziszowski2002}. Further upper bound improvements
to $32$ and $39$ for $R(C_4,K_9)$ and $R(C_4,K_{10})$, respectively, were
presented in \cite{Xu2009b}. 
\vspace*{1em}

For graph
$G$, $V(G)$ is the vertex set; $E(G)$ is the edge set; $N_G(v)$ is the
neighborhood of $v \in V(G)$; $\deg_G(v)$ is $\abs{N_G(v)}$; $\delta(G)$ is the
minimum degree; and $\alpha(G)$ is the independence number.

\section{Algorithms and Computations}
\subsection{Higher Level}
The computations and algorithms used in this work are similar to those described
in \cite{Radziszowski2002}. Comparable methods have been used to find other Ramsey
numbers, such as in \cite{McKay1995,Goedgebeur2013}.

The main idea behind the computations is to enumerate the sets
$\mathcal{R}(C_4,K_m)$. If $\mathcal{R}(C_4,K_m;n) \neq \emptyset$, then
$R(C_4,K_m) > n$, and if $\mathcal{R}(C_4,K_m;n+1) = \emptyset$, then
$R(C_4,K_m) \leq n + 1$. The latter is usually accomplished by extending
$\mathcal{R}(C_4,K_m;t)$ to graphs in sets with higher $m$ and/or $t$.
Two methods used to achieve this are described in the next section.

Some special properties of $C_4$-free graphs proved useful during our
computations. One such property involves an extremal Tur\'{a}n-type problem
involving $C_4$-free graphs. Let $\ex(n,C_4)$ be the maximum number of edges of
an $n$-vertex $C_4$-free graph. These numbers have been studied extensively both
theoretically and computationally (cf. \cite{ExtremalGraphTheory}). The values
of $\ex(n,C_4)$ for $1 \leq n \leq 32$ are known
\cite{Clapham1989,Yuansheng1992,Shao2009} and they are displayed in Table
\ref{tab:ex}.

\begin{lemma}[\cite{Chvatal1972,ExtremalGraphTheory}]\label{lem:bounds}
  If a $C_4$-free graph has $n$ vertices, $e$ edges, and minimum degree $\delta$, then
   $\delta^2 - \delta + 1 \leq n$ and 
  $e < \frac{1}{4}\: n\: (1+\sqrt{4n-3})$.
\end{lemma}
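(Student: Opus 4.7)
\section*{Proof plan}

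The plan is to handle the two inequalities separately, each through a standard double-counting argument that exploits the defining $C_4$-free property: \emph{any two distinct vertices of $G$ have at most one common neighbor} (otherwise four vertices form a $C_4$).

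For the bound $\delta^2-\delta+1\le n$, I would start by picking a vertex $v$ with $\deg_G(v)=\delta$ and listing its neighbors $u_1,\ldots,u_\delta$. Each $u_i$ has at least $\delta-1$ neighbors in $V(G)\setminus\{v\}$, i.e.\ $\abs{N_G(u_i)\setminus\{v\}}\ge \delta-1$. The $C_4$-free condition implies that for $i\ne j$, the only common neighbor of $u_i$ and $u_j$ can be $v$, so the sets $N_G(u_i)\setminus\{v\}$ are pairwise disjoint subsets of the $n-1$ vertices other than $v$. Summing gives $\delta(\delta-1)\le n-1$, which is the first inequality.

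For the edge bound, I would count \emph{cherries}, i.e.\ ordered pairs $(\{a,b\},w)$ with $w$ a common neighbor of $a\ne b$. Grouping by the middle vertex $w$ gives the total $\sum_{w\in V(G)}\binom{\deg_G(w)}{2}$; grouping by the unordered pair $\{a,b\}$ and using the $C_4$-free property (at most one common neighbor per pair) yields the upper bound $\binom{n}{2}$. Hence
\begin{equation*}
\sum_{w\in V(G)} \binom{\deg_G(w)}{2} \;\le\; \binom{n}{2}.
\end{equation*}
Applying convexity of $x\mapsto\binom{x}{2}$ to $\sum_{w}\deg_G(w)=2e$ gives $\sum_w\binom{\deg_G(w)}{2}\ge n\binom{2e/n}{2}=\frac{2e^2}{n}-e$, so $\frac{2e^2}{n}-e\le \frac{n(n-1)}{2}$. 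Multiplying by $2n$ and solving the quadratic $4e^2-2ne-n^2(n-1)\le 0$ for $e>0$ yields $e\le \tfrac14 n(1+\sqrt{4n-3})$.

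The only delicate point is the strictness of the inequality on $e$. Equality in the chain above would require (i) equality in convexity, i.e.\ the graph is regular with $2e/n$ an integer, and (ii) every pair of vertices having exactly one common neighbor, which forces $G$ to be a friendship-type/strongly regular configuration existing only for very specific $n$; for all other $n$ the inequality is strict, matching the statement. I expect this is the one subtlety worth flagging, while the rest of the argument is standard double counting.
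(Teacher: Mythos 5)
Your two counting arguments are correct and are exactly the standard proofs behind the sources the paper cites: note that the paper itself gives no proof of Lemma \ref{lem:bounds} (it is quoted from Chv\'{a}tal--Harary and Bollob\'{a}s), and your disjoint-neighborhood count for $\delta^2-\delta+1\le n$ and Reiman's cherry count plus convexity for the edge bound are precisely the textbook route, so there is no methodological divergence to speak of. The disjointness step is sound because a vertex $w\in (N_G(u_i)\cap N_G(u_j))\setminus\{v\}$ would close the $4$-cycle $v u_i w u_j$, and the algebra $4e^2-2ne-n^2(n-1)\le 0$ does yield $e\le\tfrac14 n(1+\sqrt{4n-3})$.

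The one point worth tightening is the strictness discussion you flagged, which you can actually close completely rather than leave as a hedge. Equality in your chain forces (a) every pair of distinct vertices to have exactly one common neighbor and (b) all degrees equal. By the Erd\H{o}s--Ko--R\'{e}nyi friendship theorem, (a) alone forces a windmill graph (a vertex joined to $k$ disjoint edges), and the only regular windmill is $K_3$. So strict inequality holds for every $n\ne 3$; but at $n=3$ the graph $K_3$ is $C_4$-free with $e=3=\tfrac14\cdot 3\cdot(1+\sqrt{9})$, so the lemma's strict inequality as literally stated \emph{fails} there --- consistent with $\ex(3,C_4)=3$ in Table \ref{tab:ex}. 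Your closing sentence (``for all other $n$ the inequality is strict, matching the statement'') is therefore slightly off: the statement claims strictness for all $n$, and no proof can deliver that. This is a defect of the statement rather than of your argument, and it is harmless for the paper, which only invokes the edge bound for $n\ge 36$; still, a complete write-up should either prove the non-strict bound and cite the friendship theorem to get strictness for $n\ge 4$, or add the hypothesis $n\ge 4$.
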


\begin{table}
\centering
\begin{tabular}{ r | r r r r r r r r r r  }
  \multicolumn{1}{c|}{$n$} &  3 & 4 & 5 & 6 & 7 & 8 & 9 & 10 & 11 & 12\\ \hline
  \multicolumn{1}{c|}{$\ex(n,C_4)$} &  3 & 4 & 6 & 7 & 9 & 11 & 13 &
  16\ & 18 & 21\vspace*{1em}\\
  \multicolumn{1}{c|}{$n$} & 13 & 14 & 15 & 16 & 17 & 18 & 19 & 20 & 21 & 22\\ \hline
  \multicolumn{1}{c|}{$\ex(n,C_4)$} & 24 & 27 & 30 & 33 & 36 & 39 & 42 & 46 &
  50 & 52\vspace*{1em}\\
   \multicolumn{1}{c|}{$n$} & 23 & 24 & 25 & 26 & 27 & 28 & 29 & 30 & 31 & 32\\ \hline
  \multicolumn{1}{c|}{$\ex(n,C_4)$} & 56 & 59 & 63 & 67 & 71 & 76 & 80 & 85 & 90 & 92\\
\end{tabular}
\vspace*{1em}
\caption{Known values for $\ex(n,C_4)$ \cite{Clapham1989,Yuansheng1992,Shao2009}.}
\label{tab:ex}
\end{table}

\subsection{Methods}\label{sec:methods}
Our enumeration of various classes of $(C_4,K_m)$-graphs uses two computational
methods, \ve and \glb, described below.  

\vspace*{1em}

\noindent \ve
\vspace*{.5em}

This algorithm  extends a $(C_4,K_m;n)$-graph
$G$ to all possible $(C_4,K_m;n+1)$-graphs $G'$ containing $G$ by attaching a
new vertex $v$ to all feasible neighborhoods in $G$. By feasible, we mean that
the additional edges do not create a $C_4$ while also preserving $\alpha(G') <
m$. If complexity of computations is ignored, then full enumeration of
$\mathcal{R}(C_4,K_m;n+1)$ can clearly be obtained from $\mathcal{R}(C_4,K_m;n)$ with
this method.
\vspace*{1em}

\noindent{\sc Glue}
\vspace*{.5em}

The second method, called the {\sc Glue} algorithm, constructs
$\mathcal{R}(C_4,K_m;n+\delta+1)$ from $\mathcal{R}(C_4,K_{m-1};n)$, where
$\delta$ is the minimum degree of the new graphs. For a
$(C_4,K_{m};n+\delta+1)$-graph $G$, let $v \in V(G)$ be such that
$\deg_G(v)=\delta(G)$, and let $X$ be the subgraph induced by $N_G(v)$; $X$ must
be a $(P_3,K_{m};\delta)$-graph. Note that such a graph must be of the form
$sK_2 \cup tK_1$, where $2s+t=\delta$ and $s+t<m$. Let $Y$ be the induced subgraph of $V(G)
\setminus (X \cup \{v\})$; $Y$ must be a $(C_4,K_{m-1};n)$-graph. If we know
$\mathcal{R}(C_4,K_{m-1};n)$, we can find all graphs in
$\mathcal{R}(C_4,K_{m};n+\delta+1)$ by considering how each vertex $x \in X$ can
be connected to the vertices of $Y$. We call each neighborhood $N(x) \cap V(Y)$
the \emph{cone} of $x$, denoted $c(x)$. We say that the cone $c(x)$ is
\emph{feasible} if:
\begin{enumerate}
  \item $c(x)$ does not contain two endpoints of any $P_3$ in $Y$.
  \item For distinct $x_1,x_2 \in V(X)$, $c(x_1) \cap c(x_2) = \emptyset$.
  \item For each edge $\{x_1,x_2\} \in E(X)$, there is no $y_1 \in
    c(x_1)$ and $y_2 \in c(x_2)$ such that $\{y_1,y_2\} \in E(Y)$.
  \item For each subgraph induced by $X' \subseteq X$ and 
     $Y'$ induced by $V(Y) \setminus \bigcup_{x \in X'} c(x)$, 
    $\alpha(X')+\alpha(Y') < m$.
\end{enumerate}

\noindent Conditions 1, 2, and 3 prevent $C_4$'s, while condition 4 prevents 
independent sets of order $m$. Figure \ref{fig:c4_glue} presents the main idea
of {\sc Glue}.

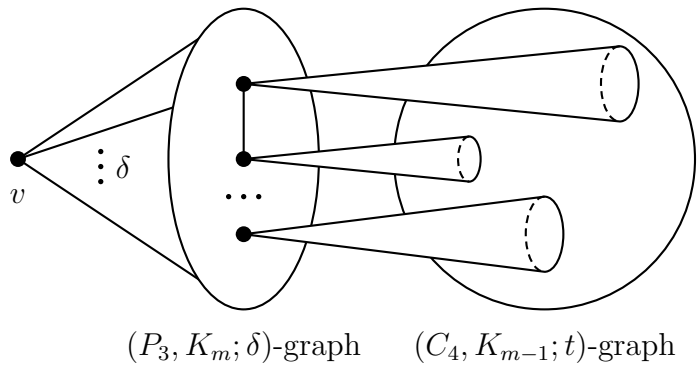
\begin{figure}
\centering
\vspace*{1em}
{\begin{tikzpicture}[style=thick]
  \fill[color=black] (1,1) circle (3pt);
  \node at (1,.5) {$v$};
  \draw (1,1)--(4,3);
  \draw (1,1)--(4,-1);
  \draw (1,1)--(4,2);
  \draw[fill=white] (4,1) ellipse (1cm and 2cm);
  \draw (8,1) circle (2cm);
  \fill[color=black] (2.1,1.1) circle (1pt);
  \fill[color=black] (2.1,.9) circle (1pt);    
  \fill[color=black] (2.1,.7) circle (1pt);
  
  \fill[color=white] (9,2.5) arc (90:-90:0.25cm and 0.5cm) -- (4,2) -- cycle;
  \draw (9,2.5) arc (90:-90:0.25cm and 0.5cm) -- (4,2) -- cycle;
  \draw[densely dashed] (9,2.5) arc (90:270:0.25cm and 0.5cm);
  \fill[color=black] (4,2) circle (3pt);
  
  \fill[color=white] (8,.5) arc (90:-90:0.25cm and 0.5cm) -- (4,0) -- cycle;
  \draw (8,.5) arc (90:-90:0.25cm and 0.5cm) -- (4,0) -- cycle;
  \draw[densely dashed] (8,.5) arc (90:270:0.25cm and 0.5cm);
  \fill[color=black] (4,0) circle (3pt);
  
  \fill[color=white] (7,1.3) arc (90:-90:0.15cm and 0.3cm) -- (4,1) -- cycle;
  \draw (7,1.3) arc (90:-90:0.15cm and 0.3cm) -- (4,1) -- cycle;
  \draw[densely dashed] (7,1.3) arc (90:270:0.15cm and 0.3cm);
  \fill[color=black] (4,1) circle (3pt);
  
  \fill[color=black] (4,0.5) circle (1pt);
  \fill[color=black] (4.2,0.5) circle (1pt);    
  \fill[color=black] (3.8,0.5) circle (1pt);

  \draw (4,2)--(4,1);
  
  \node at (2.4,.9) {$\delta$};
  \node at (4,-1.5) {$(P_3,K_m; \delta )$-graph};
  \node at (8,-1.5) {$(C_4,K_{m-1};t)$-graph};
\end{tikzpicture}}
\caption{Gluing to a $(C_4,K_m;\delta+t+1)$-graph.}
\label{fig:c4_glue}
\end{figure}

\subsection{Implementation and Optimization}
Two separate implementations of \ve and \gl were used in order to corroborate
the correctness of the results. In all cases where both implementations were
used, the results agreed. We list the details of this agreement in the Appendix.

The rules for {gluing} $(C_4,K_m)$-graphs described in Section \ref{sec:methods}
allowed for a much needed speedup in computations. In most cases, it was
beneficial to preprocess the $Y$ graphs before gluing, storing information about
the feasibility of the cones. For example, all subsets of vertices containing
endpoints of a $P_3$ were removed from the list of feasible cones. Speed was
greatly increased by precomputing the independence number $\alpha(Y')$ of each subgraph,
which was critical for efficient testing of condition 4. This
proved to be a bottleneck of the computations, and multiple strategies and
implementations were tested. The most efficient algorithm implemented was based on {\it
  Algorithm 1: Precomputing independence number}, described in
\cite{Goedgebeur2013}. All data was stored in arrays of size $2^n$, where the
integer index of the array represented the bit-set of the vertices of the
subgraph.

Two isomorphism testing tools were used in our implementations. The first 
implemented an algorithm described by William Kocay \cite{Kocay1996}. The other
 made use of the well-known software {\tt nauty} by Brendan McKay
\cite{nauty}.

\section{Results}
First, we obtained a full enumeration of $\mathcal{R}(C_4,K_7)$. This was
significant, as the same enumeration was computationally infeasible when these
methods were attempted in 2002
\cite{Radziszowski2002}. $\mathcal{R}(C_4,K_7)$ was first obtained using
\veb. The same results were obtained when gluing from $\mathcal{R}(C_4,K_6)$. For more
information on these and similar consistency checks, see the Appendix.
The statistics of $\mathcal{R}(C_4,K_7)$ by vertex and edge counts are displayed
in Tables \ref{tab:7a} and \ref{tab:7b}. The cases of counts found in
\cite{Radziszowski2002} agree with ours.

\begin{table}
  {\tabcolsep=0.30em \small
  \centering
  \begin{tabular}{r c |rrrrrrrrr}
     & $n$ & $7$ & $8$ & $9$ & $10$ & $11$ & $12$ & $13$ & $14$ & $15$\\
    $e$ &&&&&&&&&&\\ \hline
    $1$ && 1& & & &  & & & & \\
    $2$ && 2& 1& & & & & & & \\
    $3$ && 5& 4& 1& & & & & & \\
    $4$ && 9& 9& 4&  1 & & & & & \\
    $5$ && 18& 20&14&4 & 1 & & & & \\
    $6$ && 29& 42&40&16& 3 &1 & & & \\
    $7$ && 30& 71&91&57 &13 &2& & & \\
    $8$ && 17& 88&178&172&56&9&1& & \\
    $9$ && 5& 72&274&422&221&41&4& & \\
    $10$ && & 31&289&805&737&183&19& 1& \\
    $11$ && & 5&197&1135&1947&779&94&5& \\
    $12$ && &  &74&1097&3861&2912&469&28&1 \\
    $13$ && &  &10&670&5405&8660&2221&151&5 \\
    $14$ && &  &  &222&5046&18943&9455&826&29 \\
    $15$ && &  &  &34&2965&28496&32805&4367&163 \\
    $16$ && &  &  &2&971&27902&84467&21211&920 \\
    $17$ && &  &  & &146&16897&148686&87187&5218  \\
    $18$ && &  &  & &11&5831&168441& 277608&27740\\
    $19$ && &  &  & &  &1013&116266& 622072&130043 \\
    $20$ && &  &  & &  &82&45788& 904916&507036\\
    $21$ && &  &  & &  &3&9434&801944&1513611 \\
    $22$ && &  &  & &  & &916&406222&3119854\\
    $23$ && &  &  & &  & &39&108749&4033237\\
    $24$ && &  &  & &  & &2&14039&3021620\\
    $25$ && &  &  & &  & & &818&1215627\\
    $26$ && &  &  & &  & & &24&241075\\
    $27$ && &  &  & &  & & &1&21639\\
    $28$ && &  &  & &  & & & &851\\
    $29$ && &  &  & &  & & & &22\\
    $30$ && &  &  & &  & & & &2\\
    \hline
    \multicolumn{2}{c|}{Total} & 116& 343&1172 &4637 &21383 &111754 &619107 &3250169 &13838693 \\
  \end{tabular}
  \caption{Statistics for $\mathcal{R}(C_4,K_7;n)$, $7 \leq n \leq 15$.\\ Note
    that for $n < 7$ the counts would be for all $C_4$-free graphs.}
  \label{tab:7a}
  }
\end{table}

\begin{table}
\centering
{
\renewcommand{\arraystretch}{1}
\small
\begin{tabular}{rc|rrrrr @{$\quad\;\;\;$}r}
  & $n$ & $16$ & $17$ & $18$ & $19$ & $20$ & $21$ \\
  $e$ & & & & & &\\ \hline
  $14$ & &1 & & & &\\
  $15$ & &5 & & & &\\
  $16$ && 23 & 1 &  & & &\\
  $17$ && 116& 3 & &  & &\\
  $18$ && 644& 11& 1& & &\\
  $19$ && 3602& 51& 1& & &\\
  $20$ && 19588& 251& 3&  & &\\
  $21$ && 97521& 1311& 12& & & \\
  $22$ && 423964& 6805& 45& &  &\\
  $23$ && 1543985& 33476& 198&  & &\\
  $24$ && 4434855& 149441&908 & & & \\
  $25$ && 9068568& 585687&4045 & &  &\\
  $26$ && 11612126& 1964782&16971 &  &  &\\
  $27$ && 8299450& 5448131&64462 &  & &\\
  $28$ && 3016205& 11583843&219831 &  & &\\
  $29$ && 511367&  16465694&672324 &1  & &\\
  $30$ && 37318&   13277929&1813931 &18  & &\\
  $31$ && 1167&    5287770&4096321 &233  & &\\
  $32$ && 26&      938464&6953952 &2399 &  & \\
  $33$ && 2&       68369&7533349 &17474 &  & \\
  $34$ && &2018&4275886 &83786 &  &\\
  $35$ && &35&1064229 &261093  & &\\
  $36$ && &1&102512 &520551 & & \\
  $37$ && & &3512 & 605219 &1& \\
  $38$ && & &53 &328849 &12 & \\
  $39$ && & &1 &64919 &126 & \\
  $40$ && & &  &4132& 999 & \\
  $41$ && & &  &107 & 3611& \\
  $42$ && & &  &4 &3762 & \\
  $43$ && & &  &  &897&  \\
  $44$ && & &  &  &53 & \\
  $45$ && & &  &  &2 &1 \\
  $46$ && & &  &  &  &2 \\
  \hline
  \multicolumn{2}{c|}{Total} &39070533& 55814073& 26822547& 1888785& 9463&3\\
\end{tabular}
}
\caption{Statistics for $\mathcal{R}(C_4,K_7;n)$, $16 \leq n \leq 21$.}
\label{tab:7b}
\end{table}

\vspace*{1em} 

Once $\mathcal{R}(C_4,K_7)$ was obtained, we were able to construct
$\mathcal{R}(C_4,K_8;n)$ for $n$ equal to $23$, $24$, and $25$. The gluing of
$\mathcal{R}(C_4,K_8;23)$ turned out to be the most computationally expensive,
as there are 353015495 such graphs, but this was needed in order to extend them
further to
$\mathcal{R}(C_4,K_9;29)$. The counts for $\mathcal{R}(C_4,K_8;23)$ are
displayed by size and minimum degree in Table \ref{tab:8a}. Statistics for
$\mathcal{R}(C_4,K_8;24)$ and $\mathcal{R}(C_4,K_8;25)$ are gathered in Table
\ref{tab:8b}.
Our computations found that no $(C_4,K_8)$-graph exists with minimum degree
$5$. 

\begin{table}
\centering
{
\begin{tabular}{rc | r r r r || r }
& $\delta$&	1&	2	&	3&		4&		Total\\
$e$ & & & & & &\\ \hline
40&&	&	1	&		&	&	 	1\\
41&&	&	13&			&	&	 	13\\
42&&	&	201	&		&		 &	201\\
43&&	&	3055	&	108	&		 &	3163\\
44&&	&	36884	&	8517	&		 &	45401\\
45&&	&	302179	&	260678	&		 &	562857\\
46&&	1&	1449548	&	3502385	&	83	 &	4952017\\
47&&	6&	3662039	&	23059729&	35368	 &	26757142\\
48&&	29&	4576213	&	75076644&	1563123	 &	81216009\\
49&&	53&	2716695	&	110589375&	11348103&	124654226\\
50&&	27&	744258	&	66302337&	19535975&	86582597\\
51&&	3&	95358	&	15327155&	9727032	 &	25149548\\
52&&	&	5827	&	1352590	&	1588719	 &	2947136\\
53&&	&	164	&	47152	&	94684	 &	142000\\
54&&	&	6	&	732	&	2404	 &	3142\\
55&&	&		&	4	&	37	 &	41\\
56&&	&		&		&	1	 &	1\\ \hline
\multicolumn{2}{c|}{Total}&	119&	13592441&	295527406&	43895529&	353015495
\end{tabular}}
\caption{Size vs minimum degree of graphs in $\mathcal{R}(C_4,K_8;23)$.\\ All such
graphs with $\delta = 4$ were used with \gl  to find $(C_4,K_9;29)$-graphs.}
\label{tab:8a}
\end{table}

\begin{table}
\centering
{
\begin{tabular}{rc|rr}
  & $n$ & $24$ & $25$  \\
  $e$ && & \\ \hline
  $48$ && 1& \\
  $49$ && 6& \\
  $50$ && 48& \\
  $51$ && 394& \\
  $52$ && 3133&  \\
  $53$ && 21116&  \\
  $54$ && 60646&\\
  $55$ && 57944& \\
  $56$ && 18863&  \\
  $57$ && 2102&  \\
  $58$ && 96& 2 \\
  $59$ && 4& 10 \\
  $60$ && & 15 \\
  $61$ && & 9 \\
  \hline
  \multicolumn{2}{c|}{Total} & 164353& 36\\

\end{tabular}}
\caption{Statistics for $\mathcal{R}(C_4,K_8;n)$, $n=24,25$.\\These graphs
  were used to find $(C_4,K_9;m)$-graphs for $m \geq 29$.}
\label{tab:8b}
\end{table}

\begin{table}
\centering
{
\begin{tabular}{r c | r r r || r }
 &$\delta$&	3&	4&	5&	 Total\\
$e$ & & & & &\\ \hline
70&&	1&	1&	&	  2\\
71&&	8&	5&	&	  13\\
72&&	12&	11&	&	  23\\
73&&	18&	33&	1&	  52\\
74&&	10&	64&	7&	  81\\
75&&	&	49&	9&	  58\\
76&&	&	19&	7&	  26\\
77&&	&	6&	4&	  10\\
78&&	&	&	2&	  2\\ \hline
 \multicolumn{2}{c|}{Total}&	49&	188&	30&       267
\end{tabular}}
\caption{Size vs minimum degree of graphs in $\mathcal{R}(C_4,K_9;29)$.\\ These
graphs were used to show that no $(C_4,K_{10};36)$-graph exists.}
\label{tab:9}
\end{table}

\subsection{$R(C_4,K_9)$}

\noindent We constructed the sets $\mathcal{R}(C_4,K_9;29)$ and
$\mathcal{R}(C_4,K_9;30)$ with the \gl algorithm. Since $R(C_4,K_8)=26$, any
$(C_4,K_9;29)$-graph has minimum degree $3$, $4$, or $5$ and can be
obtained from $\mathcal{R}(C_4,K_8;n)$ for $n=25,24,23$ by \glb. Note that the
minimum degree of a $(C_4,K_8;23)$-graph must be $4$ in order to glue to a graph
of minimum degree $5$. This restriction improved the speed of computation, as there is a
large number of $(C_4,K_8;23)$-graphs to consider. Statistics for
$\mathcal{R}(C_4,K_9;29)$ are found in Table \ref{tab:9}.

Similarly, any $(C_4,K_9;30)$-graph has minimum degree $4$ or $5$, and
can be obtained from $\mathcal{R}(C_4,K_8;25)$ or
$\mathcal{R}(C_4,K_8;24)$, respectively, via \gl. No
$(C_4,K_9;30)$-graphs were found, resulting in the following theorem. 

\begin{theorem} \label{thm:k9}
$R(C_4,K_9)=30$.
\end{theorem}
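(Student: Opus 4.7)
The lower bound $R(C_4,K_9)\geq 30$ is already established in \cite{Radziszowski2002} via an explicit $(C_4,K_9;29)$-graph (indeed Table \ref{tab:9} will reproduce many such graphs), so the plan is to prove the matching upper bound by showing $\mathcal{R}(C_4,K_9;30)=\emptyset$. Combining the two halves then yields the theorem.

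The first step is to pin down the possible minimum degrees $\delta$ of a hypothetical $G\in\mathcal{R}(C_4,K_9;30)$. The upper constraint comes from Lemma \ref{lem:bounds}: $\delta^2-\delta+1\leq 30$ forces $\delta\leq 5$. The lower constraint uses $R(C_4,K_8)=26$: if $v$ is a vertex of minimum degree in $G$, then (as in the setup of \glb) the graph $Y$ induced on $V(G)\setminus(N_G(v)\cup\{v\})$ is a $(C_4,K_8)$-graph on $30-1-\delta$ vertices, so $30-1-\delta\leq 25$, giving $\delta\geq 4$. Therefore $\delta\in\{4,5\}$, and correspondingly $Y\in\mathcal{R}(C_4,K_8;25)$ or $Y\in\mathcal{R}(C_4,K_8;24)$. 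Both parent families have already been fully enumerated earlier in Section 3 (Table \ref{tab:8b}).

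The second step is to run the \gl procedure of Section \ref{sec:methods} on each of these two parent families, with target minimum degree $\delta=4$ (from $n=25$) and $\delta=5$ (from $n=24$). The neighborhood graph $X$ induced on $N_G(v)$ is a $(P_3,K_9;\delta)$-graph, so it must be of the form $sK_2\cup tK_1$ with $2s+t=\delta$ and $s+t\leq 8$, giving only a handful of shapes for $X$ in each case. For every choice of $Y$ and $X$, one exhaustively assigns feasible cones $c(x)\subseteq V(Y)$ to each $x\in V(X)$, subject to the four feasibility conditions of Section \ref{sec:methods}. Any completed assignment would produce a $(C_4,K_9;30)$-graph; the claim is that no completion exists in either branch.

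The principal obstacle is computational rather than conceptual: the $\delta=4$ branch demands gluing onto all $164353$ graphs in $\mathcal{R}(C_4,K_8;25)$ (wait, onto all of $\mathcal{R}(C_4,K_8;25)$, which has only $36$ graphs; the $164353$ are for $n=24$ used in the $\delta=5$ branch), and condition 4 on $\alpha(X')+\alpha(Y')<9$ must be checked over exponentially many subsets $X'\subseteq X$. Following the approach of Section 3.3, this is handled by precomputing $\alpha(Y')$ for every induced subgraph of $Y$ using the $2^{|V(Y)|}$-indexed array technique of \cite{Goedgebeur2013}, so that condition 4 reduces to constant-time lookups during the search. Running the two enumerations independently in the two implementations mentioned in Section 3.3 and confirming that each returns the empty set certifies $\mathcal{R}(C_4,K_9;30)=\emptyset$, hence $R(C_4,K_9)\leq 30$, which together with the lower bound completes the proof.
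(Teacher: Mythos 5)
Your proposal is correct and takes essentially the same route as the paper: the lower bound from the previously known construction (reconfirmed by the nonempty enumeration of $\mathcal{R}(C_4,K_9;29)$), and the upper bound by pinning $\delta\in\{4,5\}$ via Lemma \ref{lem:bounds} together with $R(C_4,K_8)=26$, then exhaustively applying \gl to $\mathcal{R}(C_4,K_8;25)$ ($\delta=4$) and $\mathcal{R}(C_4,K_8;24)$ ($\delta=5$) and finding no $(C_4,K_9;30)$-graph. Your momentary mix-up of the counts ($36$ graphs for $n=25$ versus $164353$ for $n=24$) is self-corrected and harmless.
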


\subsection{$R(C_4,K_{10})$}

\begin{theorem}\label{thm:k10}
$R(C_4,K_{10}) = 36$.
\end{theorem}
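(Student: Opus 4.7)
The plan is to prove both bounds computationally, exploiting the enumeration of $\mathcal{R}(C_4,K_9;29)$ displayed in Table \ref{tab:9} and the \gl machinery of Section~\ref{sec:methods}.

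For the upper bound $R(C_4,K_{10})\le 36$, I would first pin down the minimum degree of any hypothetical $(C_4,K_{10};36)$-graph $G$. Lemma \ref{lem:bounds} gives $\delta(G)^{2}-\delta(G)+1\le 36$, so $\delta(G)\le 6$. Conversely, let $v$ realise the minimum degree and set $H=G-N_G[v]$. Any independent set in $H$ extends by $v$, so $\alpha(H)\le\alpha(G)-1\le 8$, and $H$ is a $(C_4,K_9)$-graph on $35-\delta(G)$ vertices. Since $R(C_4,K_9)=30$ by Theorem~\ref{thm:k9}, this forces $35-\delta(G)\le 29$, i.e.\ $\delta(G)\ge 6$. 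Hence $\delta(G)=6$ and $H\in\mathcal{R}(C_4,K_9;29)$. The upper bound thus reduces to running \gl with each of the $267$ graphs of $\mathcal{R}(C_4,K_9;29)$ as $Y$ and every $(P_3,K_{10};6)$-graph as $X$; by Section~\ref{sec:methods} the latter is of the form $sK_2\cup tK_1$ with $2s+t=6$, namely one of $6K_1$, $K_2\cup 4K_1$, $2K_2\cup 2K_1$, or $3K_2$. The expected output is that no valid gluing survives conditions 1--4, yielding $\mathcal{R}(C_4,K_{10};36)=\emptyset$.

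For the lower bound $R(C_4,K_{10})\ge 36$, an analogous minimum-degree analysis shows that any $(C_4,K_{10};35)$-graph has $\delta\in\{5,6\}$ and that the non-neighbourhood of a min-degree vertex lies in $\mathcal{R}(C_4,K_9;29)$ or $\mathcal{R}(C_4,K_9;28)$. I would therefore run \gl from those $Y$-sets with $X$ ranging over $(P_3,K_{10};\delta)$-graphs of order $5$ or $6$, and retain any combined graph surviving conditions 1--4. A single surviving witness is already enough to conclude $R(C_4,K_{10})>35$, improving the previous lower bound of $34$.

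The main obstacle is efficiency rather than principle. With $267$ candidate bases $Y$, up to $2^{29}$ potential cones per vertex of $X$, and condition~4 demanding the independence number of exponentially many induced subgraphs, naive gluing is hopeless; the precomputation of all subgraph independence numbers into arrays of size $2^{|V(Y)|}$ (Section 2.3), combined with precomputed cone-feasibility tables that discard any vertex set containing the endpoints of a $P_3$ in $Y$, is what makes the search tractable. Condition~4 is by far the most delicate step, so I would cross-check the eventual non-existence claim by running both independent implementations of \gl described in Section 2.3, and by confirming consistency through the isomorphism-testing pipeline, exactly as the authors do for $\mathcal{R}(C_4,K_7)$.
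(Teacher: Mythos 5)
Your upper-bound argument is exactly the paper's: Lemma \ref{lem:bounds} gives $\delta\le 6$, the deletion of a minimum-degree closed neighborhood together with $R(C_4,K_9)=30$ (Theorem \ref{thm:k9}) gives $\delta\ge 6$, hence $\delta=6$ and every hypothetical $(C_4,K_{10};36)$-graph arises by applying \gl to one of the $267$ graphs of $\mathcal{R}(C_4,K_9;29)$ from Table \ref{tab:9}; the exhaustive gluing finds nothing. Your enumeration of the possible $X$-graphs ($3K_2$, $2K_2\cup 2K_1$, $K_2\cup 4K_1$, $6K_1$) and your emphasis on precomputing subgraph independence numbers for condition 4 also match the paper's implementation notes. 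That half is sound and essentially identical.

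The lower bound is where you diverge, and your plan as stated has a genuine practical gap. The paper does \emph{not} obtain the lower bound by any complete glue-based search: it simply exhibits two explicit $6$-regular $(C_4,K_{10};35)$-graphs $H_1$ and $H_2$ (adjacency matrix given), whose verification is a trivial $C_4$/independence check --- completeness is unnecessary for a lower bound, since one witness suffices no matter how it was found. Your minimum-degree analysis ($\delta\in\{5,6\}$, base graph in $\mathcal{R}(C_4,K_9;29)$ or $\mathcal{R}(C_4,K_9;28)$) is correct, but note two things. First, the $\delta=5$ branch, gluing from $\mathcal{R}(C_4,K_9;29)$, is precisely the computation the paper reports as empty: no $(C_4,K_{10};n)$-graphs for $n=34,35$ arise that way (consistent with $H_1$, $H_2$ being $6$-regular, so their decomposition lands in $\mathcal{R}(C_4,K_9;28)$). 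Second, the $\delta=6$ branch requires the full enumeration of $\mathcal{R}(C_4,K_9;28)$, which the paper never computed; producing it by \gl would in turn require sets such as $\mathcal{R}(C_4,K_8;22)$, well beyond $\mathcal{R}(C_4,K_8;23)$ with its $353015495$ graphs, i.e.\ far outside the computational budget of the whole project. So if you restrict your search to the data actually available, you find no witness and the lower bound fails; if you insist on completeness, the computation is infeasible. The fix is to decouple the two bounds: keep the exhaustive gluing only for the upper bound, and obtain the witness for $R(C_4,K_{10})>35$ by any incomplete method (heuristic search, extending smaller critical graphs) followed by direct verification, which is what the paper does.
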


\begin{proof}
We have found two $6$-regular $(C_4,K_{10};35)$-graphs $H_1$ and $H_2$,
establishing the lower bound. The
orbits of $H_1$ are depicted in Figure \ref{fig:liv} and its adjacency matrix is
presented in Figure \ref{fig:liv_adj}.

In order to prove $R(C_4,K_{10}) \leq 36$, it is necessary to show that no
$(C_4,K_{10};36)$-graph exists. As $R(C_4,K_9)=30$, from Lemma \ref{lem:bounds},
a $(C_4,K_{10};36)$-graph has minimum degree at most $6$ and can be obtained
from gluing a $(C_4,K_{9};29)$-graph. Gluing all of $\mathcal{R}(C_4,K_{9};29)$
resulted in finding no such graphs.
\end{proof}

The automorphism group $\mathrm{Aut}(H_1)$ has order $24$ and its action on
$V(H_1)$ has four orbits of $24$, $6$, $4$, and $1$ vertices, respectively. The
automorphism group $\mathrm{Aut}(H_2)$ has order $40$ and its action on $V(H_2)$
has three orbits of $20$, $10$, and $5$ vertices. Both graphs $H_1$ and $H_2$
have 105 edges and 35 triangles. Each vertex is on three triangles, that is,
each neighborhood is the union of three $K_2$ graphs. Both graphs are also
bicritical: removing any edge produces an independent set of order $10$, and
adding any edge produces a $C_4$.

Interestingly, no $(C_4,K_{10};n)$-graphs for $n=34,35$
were obtained  by gluing from $\mathcal{R}(C_4,K_{9};29)$.

\begin{figure}[h]
\centering
\scalebox{.8}{\begin{tikzpicture}


\newcommand*{\h}{3.2}
\newcommand*{\triscale}{.5}
\newcommand*{\triscaleb}{1.75}
\newcommand*{\triside}{0}
\newcommand*{\intri}{0.7}

\pgfmathsetmacro{\genb}{(\h  / (2*\triscaleb + 1)}
\pgfmathsetmacro{\stri}{\h * \intri}
\pgfmathsetmacro{\gen}{(\stri) / (2*\triscale + 1)}
\pgfmathsetmacro{\s}{\gen}
\pgfmathsetmacro{\ss}{\genb}

\foreach \x in {-1,1}
{
  \foreach \y in {-1,1}
  {
    \fill({\x * \s},{\y * \s}) circle (4pt);
    \fill({\x * \s},{\y * \stri} ) circle (4pt);
    \fill({\x * \stri},{\y * \s} ) circle (4pt);
    \draw[ultra thick] ({\x*\s},{\y*\s}) -- ({\x*\s},{\y*\stri});
    \draw[ultra thick] ({\x*\s},{\y*\s}) -- ({\x*\stri},{\y*\s});
    \draw[ultra thick] ({\x*\s},{\y*\stri}) -- ({\x*\stri},{\y*\s});
    \fill({\x*\ss},{\y*\ss}) circle (4pt);
    \fill({\x*\h},{\y*\ss}) circle (4pt);
    \fill({\x*\ss},{\y*\h}) circle (4pt);
    \draw[ultra thick] ({\x*\ss},{\y*\ss}) -- ({\x*\h},{\y*\ss});
    \draw[ultra thick] ({\x*\ss},{\y*\ss}) -- ({\x*\ss},{\y*\h});
    \draw[ultra thick] ({\x*\h},{\y*\ss}) -- ({\x*\ss},{\y*\h});
    \draw[thick] ({\x*\ss},{\y*\ss}) -- ({\x*\s},{\y*\s});
    \draw[thick] ({\x*\ss},{\y*\h})--({-1*\x*\ss},{\y*\h});
    \draw[thick] ({\x*\h},{\y*\ss})--({\x*\h},{-1*\y*\ss});
    \draw[thick] ({\x*\s},{\y*\stri})--({-1*\x*\s},{\y*\stri});
    \draw[thick] ({\x*\stri},{\y*\s})--({\x*\stri},{-1*\y*\s});
  }
}
\draw[thick] ({-1*\s},{\stri}) -- ({\s},{-1*\stri});
\draw[thick] ({-1*\stri},{-1*\s}) -- ({\stri},{\s});
\draw[thick] ({\ss},{\h}) -- ({-1*\ss},{-1*\h});
\draw[thick] ({-1*\h},{\ss}) -- ({\h},{-1*\ss});

\draw[thick] ({-1*\s},{\s})--({\ss},{\ss});
\draw[thick] ({\s},{\s})--({\ss},{-1*\ss});
\draw[thick] ({\s},{-1*\s})--({-1*\ss},{-1*\ss});
\draw[thick] ({-1*\s},{-1*\s})--({-1*\ss},{\ss});

\draw[thick] ({-1*\ss},{\h})--({\s},{\stri});
\draw[thick] ({\h},{\ss})--({\stri},{-1*\s});
\draw[thick] ({\ss},{-1*\h})--({-1*\s},{-1*\stri});
\draw[thick] ({-1*\h},{-1*\ss})--({-1*\stri},{\s});


\fill (-7.5,0) circle (4pt);

\fill (-5,0.5) circle (4pt);
\fill (-5,-0.5) circle (4pt);
\fill (-5,1.5) circle (4pt);
\fill (-5,2.5) circle (4pt);
\fill (-5,-1.5) circle (4pt);
\fill (-5,-2.5) circle (4pt);
\draw[thick] (-5,0.5) -- (-5,-0.5);
\draw[thick] (-5,1.5) -- (-5,2.5);
\draw[thick] (-5,-1.5) -- (-5,-2.5);

\draw[thick] (-7.5,0) -- (-5,2.5);
\draw[thick] (-7.5,0) -- (-5,1.5);
\draw[thick] (-7.5,0) -- (-5,0.5);
\draw[thick] (-7.5,0) -- (-5,-0.5);
\draw[thick] (-7.5,0) -- (-5,-1.5);
\draw[thick] (-7.5,0) -- (-5,-2.5);

\draw (4.25,1) -- (3.75,1);
\draw (4.25,0) -- (3.75,0);
\draw (4.25,-1) -- (3.75,-1);

\draw (-4.25,1) -- (-3.75,1);
\draw (-4.25,0) -- (-3.75,0);
\draw (-4.25,-1) -- (-3.75,-1);

\fill (5,0.5) circle (4pt);
\fill (5,-0.5) circle (4pt);
\fill (5,1.5) circle (4pt);
\fill (5,-1.5) circle (4pt);

\node at (-7.5, -3.75) {$(a)$};
\node at (-5, -3.75) {$(b)$};
\node at (0, -3.75) {$(c)$};
\node at (5, -3.75) {$(d)$};

\end{tikzpicture}}
\caption{The four orbits of $\mathrm{Aut}(H_1)$. Parts $(b)$ and $(c)$ are connected
  by $24$ edges, as well as $(c)$ and $(d)$. }
\label{fig:liv}
\end{figure}
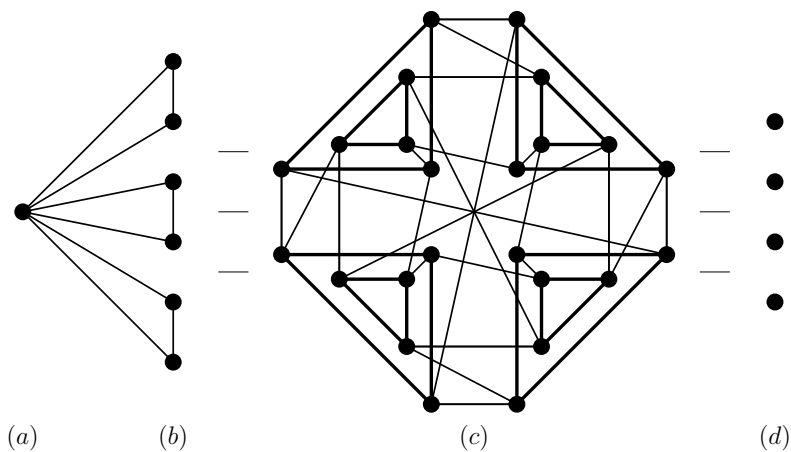

\begin{figure}[htb]
\centering
{\scriptsize 
\setlength{\tabcolsep}{3pt}
\begin{tabular}{r|r|rrrrrr|rrrrrrrrrrrrrrrrrrrrrrrr|rrrr|} 
&\multicolumn{1}{c|}{$a$}&\multicolumn{6}{c|}{$b$}&\multicolumn{24}{c|}{$c$}&\multicolumn{4}{c|}{$d$}\\      \hline
$a$&0&1&1&1&1&1&1&0&0&0&0&0&0&0&0&0&0&0&0&0&0&0&0&0&0&0&0&0&0&0&0&0&0&0&0\\ \hline
\multirow{6}{*}{$b\;$}&1&0&1&0&0&0&0&1&0&0&0&0&0&1&0&0&0&0&0&1&0&0&0&0&0&1&0&0&0&0&0&0&0&0&0\\
&1&1&0&0&0&0&0&0&0&0&1&0&0&0&0&0&1&0&0&0&0&0&1&0&0&0&0&0&1&0&0&0&0&0&0\\
&1&0&0&0&1&0&0&0&0&0&0&1&0&0&0&0&0&1&0&0&0&0&0&1&0&0&0&0&0&1&0&0&0&0&0\\
&1&0&0&1&0&0&0&0&1&0&0&0&0&0&1&0&0&0&0&0&1&0&0&0&0&0&1&0&0&0&0&0&0&0&0\\
&1&0&0&0&0&0&1&0&0&1&0&0&0&0&0&1&0&0&0&0&0&1&0&0&0&0&0&1&0&0&0&0&0&0&0\\
&1&0&0&0&0&1&0&0&0&0&0&0&1&0&0&0&0&0&1&0&0&0&0&0&1&0&0&0&0&0&1&0&0&0&0\\ \hline
\multirow{24}{*}{$c\;$}&0&1&0&0&0&0&0&0&1&1&0&1&0&1&0&0&0&0&0&0&0&0&0&0&0&0&0&0&0&0&0&0&1&0&0\\
&0&0&0&0&1&0&0&1&0&1&0&0&0&0&0&0&0&0&0&0&0&0&0&0&1&0&1&0&0&0&0&1&0&0&0\\
&0&0&0&0&0&1&0&1&1&0&0&0&0&0&0&0&0&0&0&0&0&1&0&0&0&0&0&0&1&0&0&0&0&0&1\\
&0&0&1&0&0&0&0&0&0&0&0&1&1&0&0&1&0&0&0&0&0&0&0&0&0&0&0&0&1&0&0&1&0&0&0\\
&0&0&0&1&0&0&0&1&0&0&1&0&1&0&0&0&0&0&0&0&0&0&0&1&0&0&0&0&0&0&0&0&1&0&0\\
&0&0&0&0&0&0&1&0&0&0&1&1&0&0&0&0&0&0&1&0&0&0&0&0&0&0&1&0&0&0&0&0&0&1&0\\
&0&1&0&0&0&0&0&1&0&0&0&0&0&0&1&1&0&0&0&0&0&0&0&0&0&0&0&0&0&1&0&0&0&1&0\\
&0&0&0&0&1&0&0&0&0&0&0&0&0&1&0&1&0&0&1&0&1&0&0&0&0&0&0&0&0&0&0&0&0&0&1\\
&0&0&0&0&0&1&0&0&0&0&1&0&0&1&1&0&0&0&0&0&0&0&0&0&0&0&0&1&0&0&0&1&0&0&0\\
&0&0&1&0&0&0&0&0&0&0&0&0&0&0&0&0&0&1&1&0&0&0&1&0&0&0&0&1&0&0&0&0&1&0&0\\
&0&0&0&1&0&0&0&0&0&0&0&0&0&0&0&0&1&0&1&1&0&0&0&0&0&0&0&0&0&1&0&1&0&0&0\\
&0&0&0&0&0&0&1&0&0&0&0&0&1&0&1&0&1&1&0&0&0&0&0&0&0&0&0&0&0&0&0&0&0&0&1\\
&0&1&0&0&0&0&0&0&0&0&0&0&0&0&0&0&0&1&0&0&1&1&0&0&0&1&0&0&0&0&0&1&0&0&0\\
&0&0&0&0&1&0&0&0&0&0&0&0&0&0&1&0&0&0&0&1&0&1&0&0&0&0&0&0&0&0&1&0&1&0&0\\
&0&0&0&0&0&1&0&0&0&1&0&0&0&0&0&0&0&0&0&1&1&0&1&0&0&0&0&0&0&0&0&0&0&1&0\\
&0&0&1&0&0&0&0&0&0&0&0&0&0&0&0&0&1&0&0&0&0&1&0&1&1&0&0&0&0&0&0&0&0&1&0\\
&0&0&0&1&0&0&0&0&0&0&0&1&0&0&0&0&0&0&0&0&0&0&1&0&1&1&0&0&0&0&0&0&0&0&1\\
&0&0&0&0&0&0&1&0&1&0&0&0&0&0&0&0&0&0&0&0&0&0&1&1&0&0&0&0&0&0&1&1&0&0&0\\
&0&1&0&0&0&0&0&0&0&0&0&0&0&0&0&0&0&0&0&1&0&0&0&1&0&0&1&1&0&0&0&0&0&0&1\\
&0&0&0&0&1&0&0&0&1&0&0&0&1&0&0&0&0&0&0&0&0&0&0&0&0&1&0&1&0&0&0&0&0&1&0\\
&0&0&0&0&0&1&0&0&0&0&0&0&0&0&0&1&1&0&0&0&0&0&0&0&0&1&1&0&0&0&0&0&1&0&0\\
&0&0&1&0&0&0&0&0&0&1&1&0&0&0&0&0&0&0&0&0&0&0&0&0&0&0&0&0&0&1&1&0&0&0&1\\
&0&0&0&1&0&0&0&0&0&0&0&0&0&1&0&0&0&1&0&0&0&0&0&0&0&0&0&0&1&0&1&0&0&1&0\\
&0&0&0&0&0&0&1&0&0&0&0&0&0&0&0&0&0&0&0&0&1&0&0&0&1&0&0&0&1&1&0&0&1&0&0\\ \hline
\multirow{4}{*}{$d\;$}&0&0&0&0&0&0&0&0&1&0&1&0&0&0&0&1&0&1&0&1&0&0&0&0&1&0&0&0&0&0&0&0&0&0&0\\
&0&0&0&0&0&0&0&1&0&0&0&1&0&0&0&0&1&0&0&0&1&0&0&0&0&0&0&1&0&0&1&0&0&0&0\\
&0&0&0&0&0&0&0&0&0&0&0&0&1&1&0&0&0&0&0&0&0&1&1&0&0&0&1&0&0&1&0&0&0&0&0\\
&0&0&0&0&0&0&0&0&0&1&0&0&0&0&1&0&0&0&1&0&0&0&0&1&0&1&0&0&1&0&0&0&0&0&0 \\ \hline
\end{tabular}}

\caption{Adjacency matrix of $H_1$ separated by the orbits of $\mathrm{Aut}(H_1)$.}
\label{fig:liv_adj}
\end{figure}

\subsection{Higher Parameters}

\begin{theorem} \label{thm:k11}
$39\leq R(C_4,K_{11})\leq 44$.
\end{theorem}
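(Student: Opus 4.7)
The plan splits into two independent parts: the upper bound follows formally from earlier results in the paper, while the lower bound will require exhibiting an explicit witness graph.

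For the upper bound $R(C_4,K_{11})\leq 44$, I would argue by contradiction. Suppose $G$ is a $(C_4,K_{11};44)$-graph and let $\delta=\delta(G)$. Lemma~\ref{lem:bounds} yields $\delta^2-\delta+1\leq 44$, so $\delta\leq 7$. On the other hand, pick $v\in V(G)$ with $\deg_G(v)=\delta$ and consider the subgraph $G'$ induced by $V(G)\setminus(N_G(v)\cup\{v\})$, which has $43-\delta$ vertices. It is $C_4$-free, and any independent set in $G'$ extends by $v$ to one in $G$, so $\alpha(G')\leq 9$. Thus $G'$ is a $(C_4,K_{10})$-graph, and Theorem~\ref{thm:k10} forces $43-\delta\leq 35$, i.e., $\delta\geq 8$, contradicting $\delta\leq 7$.

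For the lower bound $R(C_4,K_{11})\geq 39$, I would exhibit a $(C_4,K_{11};38)$-graph. Repeating the two-sided degree argument above with $n=38$ in place of $44$ pins the feasible minimum degree to $\delta\in\{2,3,4,5,6\}$, so \glb{} from the appropriate slice of $\mathcal{R}(C_4,K_{10})$ is the natural attack: for each $\delta$ in this range, glue graphs from $\mathcal{R}(C_4,K_{10};37-\delta)$ to a $(P_3,K_{11};\delta)$-neighborhood as prescribed in Section~\ref{sec:methods}. One could also try \veb{} iteratively starting from the already-known graphs $H_1,H_2$ on $35$ vertices, or search for a highly symmetric construction in the same spirit as $H_1$.

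The main obstacle is the lower bound. The upper bound is essentially a one-paragraph structural deduction, but the lower bound requires either a lucky construction or a search that combines the $(C_4,K_{10};n)$ enumeration with gluing, the same computational paradigm that dominates Section~\ref{sec:methods}. The main risk is that the relevant databases $\mathcal{R}(C_4,K_{10};37-\delta)$ for $\delta\leq 6$ may be too large to glue exhaustively; mitigating this would rely on the cone-precomputation and independence-number pruning already described in Section~\ref{sec:methods}, together with early-termination as soon as a single $(C_4,K_{11};38)$-graph is found.
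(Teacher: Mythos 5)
Your upper bound is correct and is essentially the paper's own argument: Lemma \ref{lem:bounds} forces $\delta\leq 7$, and deleting a minimum-degree vertex together with its neighborhood leaves a $C_4$-free graph on $43-\delta\geq 36$ vertices whose independence number is at most $9$ (any independent set there extends by $v$), contradicting Theorem \ref{thm:k10}. The paper phrases this as ``such a graph can be obtained by applying \gl to a $(C_4,K_{10};36)$-graph,'' which is the same deduction.

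The genuine gap is the lower bound: you propose a search but never exhibit the witness, and you missed the one-line construction the paper uses. Take $H_1$ (or $H_2$), the $6$-regular $(C_4,K_{10};35)$-graph from Theorem \ref{thm:k10}, and add a vertex-disjoint triangle. The resulting graph has $38$ vertices, is still $C_4$-free, and any independent set meets the triangle in at most one vertex, so its independence number is at most $\alpha(H_1)+1\leq 10<11$; hence it is a $(C_4,K_{11};38)$-graph and $R(C_4,K_{11})\geq 39$ with no computation at all. (The paper reuses exactly this trick for the $K_{12}$ lower bound.) Moreover, your primary plan---gluing exhaustively from $\mathcal{R}(C_4,K_{10};37-\delta)$ for $\delta\in\{2,\ldots,6\}$---is not actionable with the data the paper possesses: those sets were never enumerated (only the two explicit $35$-vertex graphs are known, and producing, say, $\mathcal{R}(C_4,K_{10};31)$ by \gl would itself require the unenumerated sets $\mathcal{R}(C_4,K_9;n)$ for $24\leq n\leq 28$), so this branch is not merely expensive but would demand a new computation exceeding everything in the paper. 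Your fallback of running \veb{} from $H_1$ would in fact succeed quickly---adding a disjoint triangle is among the three-step extensions---but once that is observed, the computer is unnecessary.
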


\begin{proof}
The lower bound is obtained by construction. A
$(C_4,K_{11};38)$-graph can easily be obtained by adding a triangle to $H_1$ or $H_2$.

If a $(C_4,K_{11};44)$-graph $G$ exists, then from Lemma \ref{lem:bounds} it
follows that $G$ must have minimum degree at most $7$. Such a graph can be
obtained by applying {\sc Glue} to a $(C_4,K_{10};36)$-graph. However, since
$R(C_4,K_{10}) = 36$, no such graph exists, and therefore $G$ does not exist
as well. 
\end{proof}

\begin{theorem}
$42\leq R(C_4,K_{12})\leq 52$.
\end{theorem}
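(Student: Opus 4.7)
The plan is to mirror the structure used for Theorems \ref{thm:k10} and \ref{thm:k11}: obtain the lower bound by a small ad hoc construction built on the graphs $H_1, H_2$ already in hand, and obtain the upper bound by combining Lemma \ref{lem:bounds} with the just-established bound $R(C_4,K_{11})\leq 44$ to rule out a $(C_4,K_{12};52)$-graph via \glb.

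For the lower bound $R(C_4,K_{12})\geq 42$, I would exhibit a $(C_4,K_{12};41)$-graph by taking $H_1$ (or equivalently $H_2$) and forming its vertex-disjoint union with two copies of $K_3$ on six fresh vertices. This graph has $35+6=41$ vertices. It is $C_4$-free since $H_i$ is $C_4$-free, each $K_3$ is $C_4$-free, and the three blocks share no edges or vertices, so no $C_4$ can span blocks. Any independent set meets each $K_3$ in at most one vertex, giving
\[
\alpha(H_i \cup K_3 \cup K_3) \leq \alpha(H_i) + 2 \leq 9 + 2 = 11 < 12,
\]
so the construction is a $(C_4,K_{12};41)$-graph, as required.

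For the upper bound, I would suppose for contradiction that a $(C_4,K_{12};52)$-graph $G$ exists. By Lemma \ref{lem:bounds}, its minimum degree $\delta$ satisfies $\delta^2-\delta+1\leq 52$, which forces $\delta\leq 7$ (since $8^2-8+1=57>52$). Following the \gl framework of Section \ref{sec:methods}, $G$ can be obtained by gluing a $(P_3,K_{12};\delta)$-graph and a $(C_4,K_{11};51-\delta)$-graph. Since $\delta\leq 7$, the required $Y$-component lives in $\mathcal{R}(C_4,K_{11};n)$ for some $n\geq 44$. But Theorem \ref{thm:k11} asserts $R(C_4,K_{11})\leq 44$, so $\mathcal{R}(C_4,K_{11};n)=\emptyset$ for every $n\geq 44$, and no such $Y$ exists. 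This contradiction gives $R(C_4,K_{12})\leq 52$.

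There is no real obstacle here: both halves are immediate once Theorems \ref{thm:k10} and \ref{thm:k11} are in place. The argument requires no additional enumeration, and in particular needs no new application of \veb\ or \glb\ beyond invoking the degree bound of Lemma \ref{lem:bounds} together with the chain of upper bounds already computed. The real cost was incurred earlier, in establishing $R(C_4,K_{11})\leq 44$; any future tightening of the interval $[42,52]$ will almost certainly come from improving $R(C_4,K_{11})$ or from a genuinely new $(C_4,K_{12})$ construction, rather than from a sharper ending argument.
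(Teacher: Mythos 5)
Your proposal is correct and takes essentially the same route as the paper: your lower-bound graph $H_i \cup K_3 \cup K_3$ is exactly the paper's construction (it adds one triangle to the $(C_4,K_{11};38)$-graph, which is itself $H_i$ plus a triangle), and your upper bound is the paper's identical argument combining $\delta \leq 7$ from Lemma \ref{lem:bounds} with the \gl decomposition and $R(C_4,K_{11}) \leq 44$ from Theorem \ref{thm:k11}.
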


\begin{proof}
The lower bound is obtained similarly as before, by adding a triangle to the $(C_4,K_{11};38)$-graphs of Theorem
\ref{thm:k11}.

As $R(C_4,K_{11})\leq 44$, any $(C_4,K_{12})$-graph can be obtained by applying
\gl to a $(C_4,K_{11})$-graph with order at most $43$. From Lemma
\ref{lem:bounds}, such a graph must have a minimum degree of at most $7$, and
therefore an order of at most $51$. Thus, $R(C_4,K_{12})\leq 52$.
\end{proof}

\section{Acknowledgments}
This research was done using resources provided by the Open Science Grid, which
is supported by the National Science Foundation and the U.S. Department of
Energy's Office of Science.  We owe many thanks to Mats Rynge for his guidance
throughout our use of the OSG, and to Gurcharan Khanna and Research Computing at
RIT for their valuable and helpful support.

\bibliographystyle{abbrv2}
\bibliography{/home/alex/Dropbox/papers/bibtex/Ramsey,/home/alex/Dropbox/papers/bibtex/Isomorphisms,/home/alex/publications/mendeley/bibtex/Ramsey2,/home/alex/publications/mendeley/bibtex/nauty,/home/alex/publications/mendeley/bibtex/RamseyC4,/home/alex/publications/mendeley/bibtex/Software,/home/alex/Dropbox/papers/bibtex/C4}

\newpage
\appendix
\section*{Appendix: Correctness of Computations}

As our main results relied on the use of algorithms, it was important to take
extra steps to verify the correctness of our implementations. Tests similar to
those described in \cite{McKay1995,Goedgebeur2013} and others were performed, as follows:

\begin{enumerate}
  \item Two independent implementations of {\sc Glue} and {\sc VertexExtend} were
    developed by the authors. The data was generated independently by both
    implementations, and all results agreed. The only data that was not
    produced by both was the full enumeration of
    $\mathcal{R}(C_4,K_8;23)$ and $\mathcal{R}(C_4,K_9;29)$, as this required the most computational
    resources. However, a partial set of $\mathcal{R}(C_4,K_8;23)$, namely when
    $\delta=1,2$, was verified.

  \item Both implementations were used to generate all graphs in
    $\mathcal{R}(C_4,K_t)$ for $4 \leq t \leq 7$. The results agreed, 
    and gave the same counts as those found in \cite{Radziszowski2002}.

  \item \label{test:remove_e} For every $(C_4,K_8;23)$-graph, we removed an edge if it did not
    increase the independence number to $8$, therefore producing a different
    $(C_4,K_8;23)$-graph. Every graph found this way was already included in the
    original set. For example, when going from size $51$ to $50$, $65059062$ of
    the $86582597$ graphs ($\approx 75\%$) were produced, none of which were new.

  \item \label{test:remove_v} For every $(C_4,K_8)$-graph with $24$ and $25$ vertices, every vertex was removed,
    creating a $(C_4,K_8)$-graph with $23$ and $24$ vertices,
    respectively. Every graph produced was already included in the set obtained earlier.

  \item Tests \ref{test:remove_e} and \ref{test:remove_v} were performed on other sets of
    graphs, including $\mathcal{R}(C_4,K_9;29)$. Like before, all graphs
    obtained this way had already been found. 

  \item We extended $\mathcal{R}(C_4,K_9;29)$ to $\mathcal{R}(C_4,K_9;30)$ via
    \ve and also obtained $\mathcal{R}(C_4,K_9;30)=\emptyset$. Similarly, we
    extended the $(C_4,K_{10};35)$-graphs $H_1$ and $H_2$ from Theorem
    \ref{thm:k10} and no $(C_4,K_{10};36)$-graphs were found.

  \item All $(C_4,K_m)$-graphs were independently verified to not contain a $C_4$ or
    independent set of order $m$ using the software {\tt sage} \cite{sage}.
\end{enumerate}

\noindent Most of the large-scale computations were performed on the
Open Science Grid. Over 175000 CPU hours (20 years) were used for these
computations. 

\end{document}